\newtheorem{theorem}{Theorem}[section]
\newtheorem{corollary}[theorem]{Corollary}
\newtheorem{proposition}[theorem]{Proposition}
\theoremstyle{definition}
\newtheorem{definition}[theorem]{Definition}
\newtheorem{remark}[theorem]{Remark}
\numberwithin{equation}{section}
\newcommand{\U}[1]{{\mathcal{U}_{#1}}}
\newcommand{\V}[1]{{\mathcal{V}_{#1}}}
\newcommand{\W}[1]{{\mathcal{W}_{#1}}}
\newcommand{\sE}{\mathcal{E}}
\newcommand{\sF}{\mathcal{F}}
\newcommand{\df}[1]{{{\bf #1}}}
\renewcommand{\epsilon}{\varepsilon}
\begin{document}


\renewcommand{\bf}{\bfseries}
\renewcommand{\sc}{\scshape}

\title[Coarse direct products and property {C}]{Coarse direct products and property {C}}
\author{G.~Bell}
\address{Department of Mathematics and Statistics, The University of North Carolina at Greensboro, Greensboro, NC 27402, USA} 
\email{gcbell@uncg.edu}

\author{A.~Lawson}
\address{Department of Mathematics and Statistics, The University of North Carolina at Greensboro, Greensboro, NC 27402, USA} 
\email{azlawson@uncg.edu}
\date{\today}

\begin{abstract}
We show that coarse property~C is preserved by finite coarse direct products. We also show that the coarse analog of Dydak's countable asymptotic dimension is equivalent to the coarse version of straight finite decomposition complexity and is therefore preserved by direct products. 
\end{abstract}
\subjclass[2010]{54F45 (primary), 54E15 (secondary)}
\keywords{coarse geometry, property {C}}

\maketitle
\section{Introduction and Preliminaries}

The coarse category was described by Roe~\cite{roe2003} as a generalization of the large-scale approach to discrete groups begun by Gromov~\cite{gromov93}. Coarse spaces are sets that are equipped with a so-called coarse structure that provides a measure of proximity without referring to a metric. Coarse structures can be derived from metric structures~\cite{roe2003,wright-dissertation}, topological structures~\cite{roe2003}, or group structures~\cite{nicas-rosenthal}. Coarse versions of asymptotic dimension~\cite{grave2005,roe2003} as well as property~C and finite decomposition complexity~\cite{bell-moran-nagorko2016} have been established and studied~\cite{Yamauchi2017}.

There is a natural notion of a direct product of coarse spaces whose structure is not necessarily derived from a coarse structure on the product of the underlying sets, see Remark~\ref{remark}. Therefore, the question of stability of coarse properties under this product is interesting. The primary goal of this short note is to show that coarse property~C is stable with respect to finite coarse products; this was shown in the metric case recently~\cite{bell-nagorko2016,Davila}. We also show that the coarse analog of Dydak's countable asymptotic dimension~\cite{dydak2016} coincides with the coarse version of straight finite decomposition complexity (sFCDC); as a result, this notion is also stable with respect to coarse products. 

A coarse structure can be defined on any set $X$. Take the multiplication (referred to here as composition) and inverse operations from the pair groupoid structure on the product $X\times X$. A collection $\mathcal{E}$ of subsets of $X\times X$ is called a \df{coarse structure} on $X$ if it contains the diagonal and is closed under subsets, finite unions, inverses, and compositions~\cite{roe2003}. 
Elements of $\sE$ will be called \df{entourages} and we call the pair $(X,\sE)$ a \df{coarse space}. 

Suppose $(X_1,\sE_1)$ and $(X_2,\sE_2)$ are coarse spaces. Let $p_i$ denote the projection map $p_i:X_1\times X_2\to X_i$ for $i\in\{1,2\}$.  We define the \textbf{product coarse structure}~\cite{grave2005} on the product $X_1\times X_2$ by 
\[\sE_1\ast\sE_2=\left\{E\subseteq(X_1\times X_2)^2\colon(p_i\times p_i)(E)\in\sE_i\hbox{ for each } i\in\{1,2\}\right\}\hbox{.}\]

\begin{remark}\label{remark}
The $C_0$-coarse structure on metric spaces was defined by Wright~\cite{wright-dissertation}. 
It is straightforward to check that the $C_0$-coarse structure on the metric product $\mathbb{N}\times [0,1]$ is a proper subset of the coarse product of the $C_0$-coarse structures on the individual spaces. Therefore, a coarse structure on the product of two spaces does not necessarily correspond to the coarse product of those same types of coarse structure on the factors. By contrast, if $X$ and $Y$ are metric spaces in the bounded coarse structure~\cite[Example 2.5]{roe2003}, the coarse space obtained from taking the bounded coarse structure on $X\times Y$ is the same as the coarse product of the spaces $X$ and $Y$ equipped with bounded coarse structures.
\end{remark}

To complete this section, we recall the definitions of coarse property C and sFCDC~\cite{bell-moran-nagorko2016}.

\begin{definition}\cite{bell-moran-nagorko2016}
A coarse space $(X,\mathcal{E})$ has \textbf{coarse property C} if and only if for any sequence $E_1\subseteq E_2\subseteq\cdots$ of entourages there is a finite sequence $\U{1}, \U{2},\ldots,\U{n}$ satisfying 
\begin{enumerate}
\item $\bigcup_{i=1}^n \U{i}$ is a cover of $X$;
\item each $\U{i}$ is \textbf{$E_i$-disjoint}; i.e., for any pair of distinct elements $U,V$ of $\U{i}$ we have $U\times V\cap E = \emptyset$; and
\item each $\U{i}$ is \textbf{uniformly bounded}; i.e., $\bigcup_{U\in\U{i}}U\times U\in\mathcal{E}$.
\end{enumerate}
\end{definition}

Let $Y$ be a subset of a coarse space $(X,\sE)$ and let $\U{}$ be a family of subsets of $X$. Let $n$ be a positive integer and let $E\in \sE$ be an entourage. We say that $Y$ admits an $(E,n)$-\df{decomposition} over $\U{}$ if $Y$ can be expressed as a union of $n$ sets $Y^1,Y^2,\ldots,Y^n$ in such a way that each $Y^i$ can be expressed as an $E$-disjoint union of sets from $\U{}$. Here, by an $E$-\df{disjoint union of sets from} $\U{}$, we mean that each $Y^i=\sqcup_j Y^i_j$, where $Y^i_j\times Y^i_{j'}\cap E=\emptyset$ if $j\neq j'$ and $Y^i_j\in\U{}$ for all $j$.

\begin{definition} (\cite{bell-moran-nagorko2016}) The coarse space $(X, \sE)$ is said to have \df{straight finite coarse decomposition complexity} (sFCDC) if for any sequence of entourages $L_1,L_2,...$ there is a sequence of families $\V{1},...,\V{n}$ so that
\begin{enumerate}
\item $\V{1} = \{X\}$;
\item for every $i\ge 1$, each $U\in\V{i}$ admits an $(L_i,2)$-decomposition over $\V{i+1}$; and
\item $\V{n}$ is uniformly bounded.
\end{enumerate}
\end{definition}
It is known that sFCDC is preserved by coarse direct products~\cite[Theorem 4.17]{bell-moran-nagorko2016}.

\section{Results}

The proof that asymptotic property C is preserved by direct products of metric spaces~\cite{bell-nagorko2016,Davila} is based on the technique used to prove the corresponding theorem for topological property C when one of the two factors is compact~\cite{rohm1990}. The proof of the following theorem is similar in spirit to the earlier works, but the absence of a metric means that more care and bookkeeping is required. In particular the single distance parameter must be replaced by two sequences of entourages in the factors.

\begin{theorem}
Let $(X, \sE)$ and $(Y,\sF)$ be coarse spaces with coarse property C. Then $(X\times Y,\sE\ast\sF)$ has coarse property C.
\end{theorem}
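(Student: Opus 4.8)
The plan is to reduce to an increasing sequence and then build the product cover from a single coarse property C decomposition of $Y$ together with one decomposition of $X$ per $Y$-level, arranged into consecutive blocks of output positions.

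First I would reduce to the case of an increasing sequence $D_1\subseteq D_2\subseteq\cdots$ in $\sE\ast\sF$ (replacing $D_k$ by $\bigcup_{i\le k}D_i$) and record its coordinate projections $E_k=(p_1\times p_1)(D_k)\in\sE$ and $F_k=(p_2\times p_2)(D_k)\in\sF$; since projection commutes with unions, both $(E_k)$ and $(F_k)$ are increasing. The decisive elementary observation is a product-disjointness lemma: if $\mathcal U$ is $E_k$-disjoint in $X$ and $\mathcal V$ is $F_k$-disjoint in $Y$, then $\{U\times V : U\in\mathcal U,\ V\in\mathcal V\}$ is $D_k$-disjoint in $X\times Y$. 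Indeed, two distinct product sets differ in their first or in their second factor; a point of $D_k$ witnessing a nonempty intersection would project under $p_1\times p_1$ into $E_k$ and under $p_2\times p_2$ into $F_k$ simultaneously, contradicting whichever of the two factor-disjointnesses applies. Such product families are also uniformly bounded in $\sE\ast\sF$ whenever the two factors are uniformly bounded in $\sE$ and $\sF$.

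The combinatorial heart is to schedule the families so that the $k$-th one is $D_k$-disjoint, which forces \emph{both} factors to be disjoint at level $k$. To break the apparent circularity between ``how many $Y$-levels are needed'' and ``where each level's block of output positions ends,'' I would first build, using only coarse property C of $X$, an auxiliary increasing sequence of cut-points. Set $P_0=0$; given $P_{l-1}$, apply property C of $X$ to the tail $E_{P_{l-1}+1}\subseteq E_{P_{l-1}+2}\subseteq\cdots$ to obtain families $\U{1}^{(l)},\ldots,\U{p_l}^{(l)}$ covering $X$ with $\U{i}^{(l)}$ being $E_{P_{l-1}+i}$-disjoint and uniformly bounded, and put $P_l=P_{l-1}+p_l$. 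These cut-points depend only on $X$ and the $E_k$. I would then apply coarse property C of $Y$ to the sequence $F_{P_1}\subseteq F_{P_2}\subseteq\cdots$, obtaining finitely many families $\V{1},\ldots,\V{n}$ covering $Y$, with each $\V{j}$ being $F_{P_j}$-disjoint and uniformly bounded.

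Finally I would assemble the output in $n$ blocks: in output position $k=P_{j-1}+i$ (for $1\le j\le n$ and $1\le i\le p_j$) place $\W{k}=\{U\times V : U\in\U{i}^{(j)},\ V\in\V{j}\}$. By the product lemma this $\W{k}$ is $D_k$-disjoint, since $\U{i}^{(j)}$ is $E_{P_{j-1}+i}=E_k$-disjoint and $\V{j}$ is $F_{P_j}$-disjoint, hence $F_k$-disjoint because $F_k\subseteq F_{P_j}$ as $k\le P_j$; it is uniformly bounded; and $\bigcup_{k=1}^{P_n}\W{k}$ covers $X\times Y$, because for each level $j$ the family $\bigcup_i\U{i}^{(j)}$ covers $X$ while $\bigcup_j\V{j}$ covers $Y$. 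The total number $P_n$ of families is finite, which completes the verification. The main obstacle is precisely the bookkeeping of the previous paragraph: one must decouple the unknown count $n$ of $Y$-levels from the unknown block lengths $p_j$, and the device of defining the cut-points $P_j$ from $X$ alone and only then feeding $F_{P_j}$ into coarse property C of $Y$ is what renders the two interlocking sequences of entourages consistent.
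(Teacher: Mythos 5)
Your proof is correct and follows essentially the same strategy as the paper's: decompose $X$ repeatedly to produce blocks of $E_k$-disjoint, uniformly bounded families, feed the entourages attached to the block endpoints into coarse property C for $Y$, and take products, using the same product-disjointness and product-boundedness observations. Your sequentially defined cut-points $P_j$ play exactly the role of the paper's two-dimensional array of indices and its column heights $n_i$, with the minor presentational advantage that the blocks are consecutive, so the increasing property of $F_{P_1}\subseteq F_{P_2}\subseteq\cdots$ is automatic and no final re-indexing or insertion of empty families is required.
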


\begin{proof}
Let $E_1\subseteq E_2\subseteq\cdots$ be a sequence of entourages in $\sE\ast \sF$. For each $i$, put $K_i=(p_1\times p_1)(E_i)$ and $L_i=(p_2\times p_2)(E_i)$. Then, by the definition of $\sE\ast\sF$, each $K_i\in \sE$ and $L_i\in\sF$. Observe that since $E_i\subseteq E_{i+1}$, we have $K_i\subseteq K_{i+1}$ and $L_i\subseteq L_{i+1}$. Arrange the indices $1,2,3,\ldots$ into a two-dimensional array with the property that the indices are increasing from left to right along rows and from bottom to top along columns (see Figure~\ref{fig:diagram}, which was first used in the metric proof~\cite{bell-nagorko2016}).

For each $i$, we apply the coarse property C definition to the column $K_{i,1}, K_{i,2}, \ldots$ to find an $n_i$ and a cover $\U{i,1},\U{i,2},\ldots,\U{i,n_i}$ of $X$ by families that are uniformly bounded and $K_{i,j}$-disjoint. Then, consider the sequence $L_{1,n_1},L_{2,n_2,},\ldots$. We may assume that the sequence is increasing by replacing $L_{i,n_i}$ by an entourage that occurs higher in the $i$-th column, if necessary. 

Using this sequence, and the fact that $Y$ has coarse property C, we find a cover $\V{1},\V{2},\ldots, \V{m}$ by subsets of $Y$ that are uniformly bounded and so that $\V{i}$ is $L_{i,n_i}$-disjoint.

\begin{figure}
\includegraphics[width=.9\textwidth]{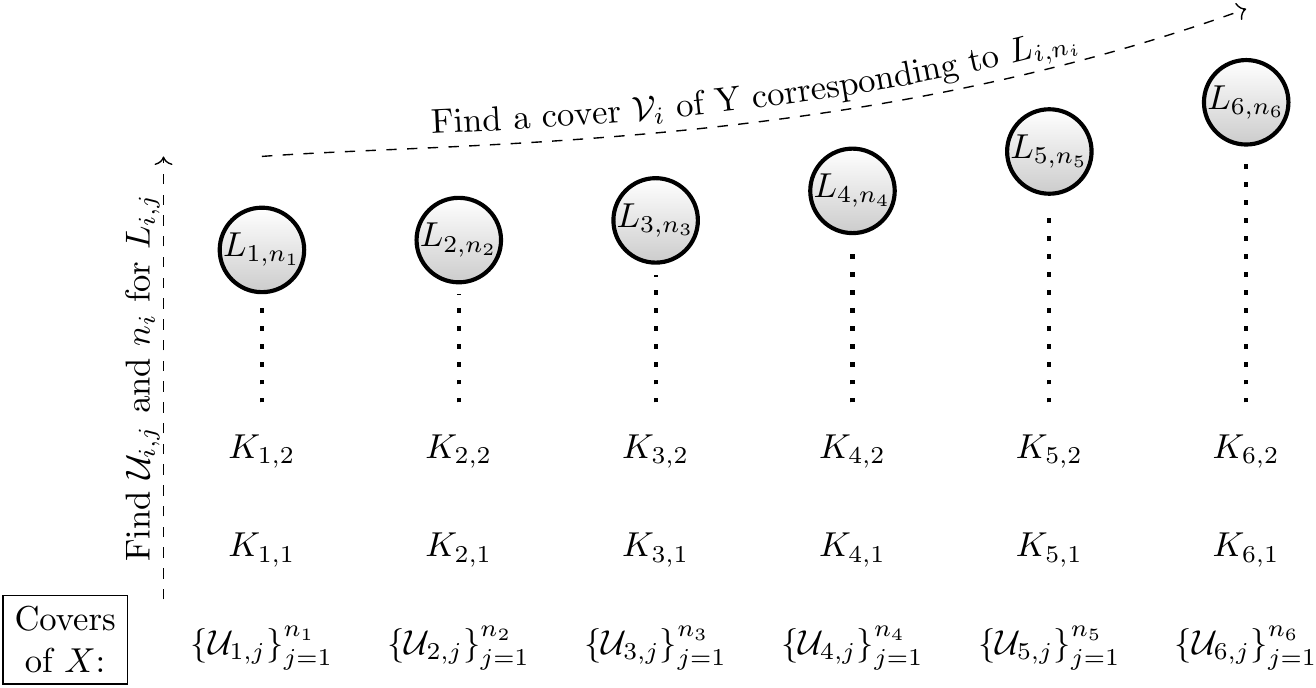}
\caption{We find covers $\{\U{i,j}\}_{j=1}^{n_i}$ of $X$ for each column $K_{i,1},K_{i,2},\ldots,$ and then construct a cover $\{\V{i}\}_{i=1}^m$ of $Y$ corresponding to $L_{i,n_i}$}\label{fig:diagram}
\end{figure}

We claim that the family $\W{i,j}=\{U\times V\colon U\in\U{i,j}, V\in\V{i}\}$ covers $X\times Y$, consists of uniformly bounded sets, and has the property that $\W{i,j}$ is $E_{i,j}$-disjoint. To finish the proof, we simply need to unravel the re-indexing to arrive at the original sequence, which may now include some empty families. 

First we check that the collection $\W{i,j}$ covers $X\times Y$. Given $(x,y)\in X\times Y$, there is some $i_0$ so that $y\in V$, where $V\in \V{i_0}$ since the $\V{i}$ cover $Y$. Next, there is some $j_0$ so that $x\in U$ with $U\in \U{i_0,j_0}$ since for each fixed $i$, the $\V{i,j}$ cover $X$.

To see that the sets are uniformly bounded, we observe that for $\ell=1,2$, \[\left(p_\ell\times p_\ell\right)\left[ \bigcup\left( (U\times V)\times (U\times V)\right)\right]\! =\!\bigcup\left[(p_\ell\times p_\ell)\left( (U\times V)\times (U\times V)\right)\right]\hbox{,}\] where the unions are taken over all $U\times V\in W_{i,j}$ for each pair $i,j$. The conclusion follows from the fact that $\U{i,j}$ and $\V{i}$ are uniformly bounded; i.e., that $\bigcup_{U\in \U{i,j}} U\times U\in \sE$ and $\bigcup_{V\in \V{i}}V\times V\in \sF$.

Finally, we check that $\W{i,j}$ is $E_{i,j}$-disjoint. To this end, take distinct $U_1\times V_1$ and $U_2\times V_2$ in $\W{i,j}$. Assume that there were some $(a,c,b,d)\in L\cap ((U_1\times V_1)\times (U_2\times V_2))$. Then, in particular, $a\in U_1$, $b\in U_2$, $c\in V_1$ and $d\in V_2$. Thus, $(a,b)\in (p_1\times p_1)(E_{i,j})=K_{i,j}$ and $(c,d)\in (p_2\times p_2)(E_{i,j})=L_{i,j}$. Since $U_1\times V_1\neq U_2\times V_2$, we either have $U_1\neq U_2$ or $V_1\neq V_2$. In the first case, the $K_{i,j}$-disjointness of $\U{i,j}$ does not allow $(a,b)$ to be in $K_{i,j}$. In the second case, the fact that $\V{i}$ is $L_{i,n_i}$-disjoint and the fact that $L_{i,j}\subset L_{i,n_i}$ for all $j\le n_i$ means that $(c,d)$ cannot be in $L_{I,j}$. Thus, there can be no such point $(a,c,b,d)$. We conclude that the intersection is empty, which is what we needed to show.
\end{proof}

Dydak defined countable asymptotic dimension for metric spaces~\cite{dydak2016} and it was shown to be equivalent to straight finite decomposition complexity by Dydak and Virk~\cite{dydak-virk2016}. 
We show that the analogous result also holds for coarse spaces. As before, some care is needed to work with entourages in the absence of a metric.

\begin{proposition}\label{prop}
Let $(X,\sE)$ be a coarse space. The following are equivalent:
\begin{enumerate}
    \item there is a sequence $(n_i)$ of integers such that for every sequence of entourages $K_i$ there is a finite sequence of families $\V{1},\ldots,\V{r}$ of subsets of $X$ such that $\V{1}=\{X\}$, every $V\in\V{i}$ admits an $(K_i,n_i)$-decomposition over $\V{i+1}$ and such that $\V{r}$ is uniformly bounded. 
    \item for every sequence $L_i$ of entourages there is a finite sequence of families $\U{1},\ldots \U{s}$ of subsets of $X$ such that $\U{1}=\{X\}$, every $U\in\U{i}$ admits a $(L_i,2)$-decomposition over $\U{i}$ and such that $\U{s}$ is uniformly bounded.
\end{enumerate}
\end{proposition}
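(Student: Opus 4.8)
The plan is to prove the two implications separately, with the substance lying entirely in $(1)\Rightarrow(2)$. For $(2)\Rightarrow(1)$, observe that condition $(2)$ is precisely the definition of sFCDC (the decomposition of each $U\in\U{i}$ being taken over $\U{i+1}$), so it suffices to take the constant sequence $n_i=2$: an $(K_i,2)$-decomposition is a special case of an $(K_i,n_i)$-decomposition, so the families produced by sFCDC for the sequence $L_i=K_i$ witness condition $(1)$ verbatim.

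For $(1)\Rightarrow(2)$, the idea is that a single $(K_i,n_i)$-decomposition can be ``unwound'' into $n_i-1$ successive $2$-decompositions, so that one step of the $(n_i)$-process is traded for a block of $n_i-1$ steps of the straight $2$-decomposition process. Since the sequence $(n_i)$ is fixed in advance, the block structure is determined before any entourages are chosen. Given an arbitrary sequence $L_1,L_2,\ldots$ for which sFCDC must be verified, I set $\sigma_0=0$ and $\sigma_i=\sum_{j=1}^{i}(n_j-1)$ and group the indices into blocks, the $i$-th block being $\{\sigma_{i-1}+1,\ldots,\sigma_i\}$. I then define entourages $K_i=\bigcup_{k=\sigma_{i-1}+1}^{\sigma_i}L_k$, which lie in $\sE$ because a coarse structure is closed under finite unions, and I apply hypothesis $(1)$ to the sequence $(K_i)$ to obtain families $\V{1},\ldots,\V{r}$ with $\V{1}=\{X\}$, with each $V\in\V{i}$ admitting a $(K_i,n_i)$-decomposition over $\V{i+1}$, and with $\V{r}$ uniformly bounded.

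The core construction expands the single step $\V{i}\to\V{i+1}$ into $n_i-1$ micro-steps. For $V\in\V{i}$, write its decomposition as $V=V^1\cup\cdots\cup V^{n_i}$ with each $V^t$ a $K_i$-disjoint union of members of $\V{i+1}$, and set the remainders $R^V_t=V^{t+1}\cup\cdots\cup V^{n_i}$, so that $R^V_0=V$, $R^V_{n_i-1}=V^{n_i}$, and $R^V_{t-1}=V^t\cup R^V_t$. The $t$-th micro-step decomposes each $R^V_{t-1}$ as $V^t\cup R^V_t$: here $V^t$ is a $K_i$-disjoint union of members of $\V{i+1}$ while $R^V_t$ is a single set, so this is an $(K_i,2)$-decomposition. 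The intermediate family after the $t$-th micro-step consists of the remainders $\{R^V_t:V\in\V{i}\}$ together with all pieces finished so far, which are carried forward by trivial one-set decompositions; at $t=n_i-1$ the last step splits $R^V_{n_i-2}=V^{n_i-1}\cup V^{n_i}$ directly into members of $\V{i+1}$, so the block terminates exactly at $\V{i+1}$. Concatenating these micro-chains over $i=1,\ldots,r-1$ yields families $\U{1},\ldots,\U{s}$ with $s=1+\sigma_{r-1}$, $\U{1}=\V{1}=\{X\}$, and $\U{s}=\V{r}$ uniformly bounded. Finally, the global $k$-th step lies in the $i$-th block precisely when $\sigma_{i-1}<k\le\sigma_i$, and there $L_k\subseteq K_i$; since a $K_i$-disjoint union is a fortiori $L_k$-disjoint, every micro-step is a genuine $(L_k,2)$-decomposition, establishing sFCDC.

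I expect the main obstacle to be the bookkeeping of the expansion rather than any single hard idea: one must check that the remainder sets and the carried-forward finished pieces really do assemble into legitimate intermediate families, that each member of one family admits a $2$-decomposition over the next (including the trivial decompositions used to transport finished pieces), and that the block-to-entourage correspondence $K_i\supseteq L_k$ matches the right indices so that the disjointness downgrades correctly. The interplay between the fixed dimension sequence $(n_i)$, which pins down the block lengths independently of the entourages, and the freedom to choose $K_i$ after seeing $(L_k)$ is what makes the argument go through without circularity.
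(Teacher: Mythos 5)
Your proposal is correct and follows essentially the same route as the paper's proof: the $(2)\Rightarrow(1)$ direction is the trivial specialization $n_i=2$, and for $(1)\Rightarrow(2)$ you unwind each $(K_i,n_i)$-decomposition into $n_i-1$ successive $2$-decompositions by peeling off one piece at a time, with $K_i$ chosen to dominate the block of $L_k$'s it must serve. The only cosmetic differences are that you take $K_i$ to be a union over the block rather than assuming the $L_k$ increasing, and you carry finished pieces forward explicitly rather than invoking the reduction to partitions.
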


\begin{proof}
Clearly (2) implies (1). 

To see the other implication, let $(n_i)$ be the sequence of positive integers satisfying (1) for $X$. Let $L_1, L_2,\ldots$ be a sequence of entourages. By taking unions we may assume $L_i\subseteq L_{i+1}$. Put $K_1=L_{n_1}$, $K_2=L_{n_1+n_2}$, and in general, put $K_j=L_{n_1+\cdots+ n_j}$. Apply (1) with the sequence $(K_i)$ to obtain $\V{1},\V{2},\ldots$ such that $\V{1}=\{X\}$ and such that $\V{i}$ admits a $(K_i,n_i)$-decomposition over $\V{i+1}$.

We will define a sequence $\U{i}$ of families of subsets of $X$ with the property that $\U{1}=\{X\}$ and $\U{i}$ admits an $(L_i,2)$-decomposition over $\U{i+1}$. To begin, we observe that we can write $X=X^1\cup X^2\cup\cdots \cup X^{n_1}$, with each $X^i=\bigsqcup_{\footnotesize K_1-\hbox{disj}}X^i_j,$ and each $X^i_j\in\V{2}$. Therefore, we take $\U{2}=\{X^1_1,X^1_2,\ldots,X^2\cup X^3\cup\cdots\cup X^{n_1}\}.$ Then, it is clear that $X$ can be $(K_1,2)$-decomposed over $\U{2}$ and since $L_1\subseteq L_{n_1}=K_1$, we see that there is an $(L_1,2)$-decomposition of any set in $\U{1}$ over the family $\U{2}$. For $\U{3}$, we take $\{X^1_1,X^1_2,\ldots, X^2_1,X^2_2,\ldots, X^3\cup X^4\cup\cdots \cup X^{n_1}\}$; we also observe that any set in $\U{2}$ admits an $(L_2,2)$-decomposition over $\U{3}$ since $L_2\subseteq L_{n_1}=K_1$.
Continue to define families this way to obtain $\U{1},\U{2},\dots,\U{n_1}$ with an $(L_i,2)$-decomposition of each $\U{i}$ over $\U{i+1}$ for $1\le i<n_1$. We observe that in this way, $\U{n_1}=\V{2}$.

To define $\U{j}$ for $j\ge n_1+1$, we observe first that we may assume each family $\V{i}$ is a partition of $X$ itself, (cf. \cite[Corollary 8.3]{dydak-virk2016}). We then repeat the above procedure to arrive at
$\U{n_1+n_2}=\V{3}$. We repeat this entire process until we arrive at $\U{n_1+\cdots+n_{r-1}}=\V{r}$, which is uniformly bounded.
\end{proof}

A coarse space has sFCDC precisely when it satisfies condition (2) of Proposition~\ref{prop}. Condition (1) of Proposition~\ref{prop} is the coarse analog of countable asymptotic dimension. Since sFCDC is preserved by coarse direct products~\cite[Theorem 4.17]{bell-moran-nagorko2016}, we obtain the following. 

\begin{corollary}
The coarse version of Dydak's countable asymptotic dimension is preserved by coarse direct products.\qed
\end{corollary}

\bibliographystyle{abbrv}

\end{document}